\theoremstyle{plain}
\newtheorem{thm}{Theorem}
\newtheorem{cor}{Corollary}
\newtheorem*{problem}{Problem}
\title{Envy-free and Approximate Envy-free Divisions of Necklaces and Grids of Beads}
\author[1]{Roberto Barrera}
\author[2]{Kathryn Nyman}
\author[3]{Amanda Ruiz}
\author[4]{Francis Edward Su}
\author[5]{Yan X Zhang}
\affil[1]{Department of Mathematics, Texas State University}
\affil[2]{Department of Mathematics, Willamette University}
\affil[3]{Department of Mathematics, University of San Diego}
\affil[4]{Department of Mathematics, Harvey Mudd College}
\affil[5]{Department of Mathematics, San Jose State University}
\date{\today}
\begin{document}
\maketitle

\begin{abstract}
We study discrete versions of the envy-free cake-cutting problem,
involving one-dimensional necklaces of beads and two-dimensional grids of beads.  
In both cases beads are indivisible, in contrast to cakes which are continuously divisible, but a general theme of our methods is an appeal to continuous cake-cutting ideas to achieve envy-free and approximate envy-free divisions of discrete objects.  (This problem is distinct from the ``necklace-splitting problem'' more commonly studied, which does not involve envy-freeness.) Our main result in two dimensions is an envy-free division of a grid of beads under certain conditions on the preferences, with near-vertical cuts.
\end{abstract}

\begin{keywords}
Discrete cake-cutting, Envy-free divisions, Necklace splitting
\end{keywords}

\section{Introduction}
\label{sec:introduction}




The archetypal fair-division problem considers a division of cake among several players and seeks to find allocations that players consider ``fair''.  One such notion of fairness is that of \emph{envy-freeness}.  An allocation is called \emph{envy-free} if every player is happy with the piece they are assigned and would not prefer to trade with another player.  Under mild assumptions, the cake cutting problem has a solution (see e.g., \cite{su1999}): 

\begin{thm} [Envy-Free Cake-Cutting]
\label{thm:continuous existence}
For any set of $n$ players who prefer cake to no cake and have closed preference sets, there exists an envy-free allocation of cake using $(n-1)$ cuts. Furthermore, there exists a finite $\epsilon$-approximate algorithm.
\end{thm}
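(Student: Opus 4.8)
The plan is to prove the existence result via Sperner's lemma applied to a triangulation of the simplex of divisions, using the classical ``rental harmony'' / cake-cutting argument. First I would set up the simplex $\Delta^{n-1}$ of divisions $(x_1,\dots,x_n)$ with $x_i \ge 0$ and $\sum x_i = l$. A barycentric-type labeling of this simplex encodes which player is placed at which vertex; more precisely, I would triangulate $\Delta^{n-1}$ and assign to each vertex of the triangulation both a player (via a Sperner-admissible ``owner'' coloring whose combinatorics match the faces) and then a label equal to the index of a piece that the assigned player prefers in the division corresponding to that vertex. The key structural fact is that this labeling is a Sperner labeling: on each face where the coordinate $x_i = 0$ (the $i$-th piece is empty), no player will prefer the empty piece (assuming players never strictly prefer a degenerate piece — here one uses that valuations are nonnegative and that the empty piece has valuation $0$, so any nonempty piece is at least as good), which forces the labels on the boundary to avoid the forbidden index and satisfy the Sperner boundary condition.

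Second I would invoke Sperner's lemma (in the appropriate ``owner-labeled'' form, as in Su's argument) to conclude that every sufficiently fine triangulation contains at least one \emph{fully-labeled} elementary simplex: a small simplex whose $n$ vertices carry all $n$ distinct labels, assigned to $n$ distinct players. Such a simplex corresponds to $n$ nearby divisions in which each of the $n$ players prefers a distinct piece; matching each player to the piece they prefer yields an approximate envy-free allocation on a cutset of diameter at most the mesh of the triangulation. This immediately gives the ``finite $\epsilon$-approximate algorithm'' claim: by taking a triangulation of mesh smaller than the tolerance (e.g.\ via repeated barycentric subdivision) and following a constructive path-following (door-to-door) version of Sperner's lemma, one locates a fully-labeled simplex in finitely many steps and reads off an allocation that is envy-free up to $\epsilon$.

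Finally, to upgrade the $\epsilon$-approximate statement to an exact envy-free division, I would take a sequence of triangulations with mesh tending to $0$. Each produces a fully-labeled simplex, hence a convergent (by compactness of $\Delta^{n-1}$) subsequence of cutsets $(x_{1k},\dots,x_{nk}) \to (x_1,\dots,x_n)$ together with a fixed assignment of players to labels (there are only finitely many such assignments, so one recurs infinitely often and can be passed to the subsequence). Applying the continuity condition stated in the excerpt — if a player prefers piece $i$ along a sequence of cutsets converging to a limit, then that player prefers piece $i$ in the limiting cutset — transfers the ``each player prefers their assigned piece'' property to the limiting division, producing an exact envy-free allocation.

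The hard part will be verifying that the labeling genuinely satisfies the Sperner boundary condition, since this is where the hypotheses on valuations do real work: one must rule out a player preferring an empty or degenerate piece on the boundary faces, and one must ensure the ``owner'' coloring can be chosen so that Sperner's lemma yields a fully-labeled simplex with \emph{distinct} players (not merely distinct labels). Handling the boundary where several coordinates vanish simultaneously, and confirming that the continuity condition is exactly strong enough to preserve preferences in the limit without introducing point masses, are the delicate points; the path-following argument for the algorithmic claim is then routine once the labeling is in place.
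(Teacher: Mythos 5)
Your proposal is correct and takes essentially the same route as the paper, which does not prove this theorem itself but defers entirely to Su's 1999 paper; the argument there is precisely the Simmons owner-labeled triangulation of the division simplex, the Sperner boundary condition coming from players never preferring an empty piece, the fully-labeled elementary simplex giving the finite $\epsilon$-approximate algorithm, and the compactness-plus-continuity limit giving the exact envy-free allocation. The delicate points you flag (tie-breaking on degenerate pieces and ensuring distinct owners in the fully-labeled simplex) are exactly the ones handled in that reference.
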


The existence of envy-free divisions has been known since Neyman \cite{neyman1946}, with recent attention paid to finding constructive proofs with potential for applications. The first constructive $n$-player envy-free solution is due to Brams and Taylor \cite{brams1995envy}.  It is a finite but unbounded procedure--- meaning that it will terminate but, depending on the preferences, the number of steps may be arbitrarily large!  Moreover, the cake could be divided into a huge number of pieces.  Aziz and Mackenzie \cite{mackenzie-aziz} recently developed a bounded $n$-person procedure, though even for small $n$ it can take an astronomical number of steps and cuts to resolve.

Other methods \cite{robertson1998cake,su1999} produce an \emph{approximate envy-free} division, i.e., a division in which each player feels their piece is within $\epsilon$ of being the best piece in their estimation.  An advantage to approximate procedures is that the number of steps and cuts is more manageable, and so has the possibility of being practical.  One such method, due to Simmons and described in \cite{su1999}, uses Sperner's lemma to accomplish a division by a minimal number of pieces.  Hence it requires only $(n-1)$ cuts.

In spirit of work such as \cite{marenco2011}, we consider a discrete analogue of the classical cake-cutting problem in which a number of indivisible beads along a line is to be allocated in an envy-free way.   We call this the problem of \emph{envy-free necklace-cutting}. Problems involving distributing indivisible goods has received considerable attention in the economics and fair division literature (see, e.g., \cite{bramsbook} and \cite{Lipton}), but with less emphasis on geometric constraints. Barbanel's work \cite{barbanel2005geometry} studies and organizes fair division under a geometric framework, but assumes divisible\footnote{Strictly speaking, \cite{barbanel2005geometry} cares about \emph{non-atomic} preferences, which is defined as when a player cannot put nontrivial value on a subset $A$ of a geometric space (analogous to a piece of cake) unless $A$ has a subset $B$ with strictly less but nonzero value to the player. However, this definition captures the intuition of divisible goods.} goods. Thus, we think our problem represents a potentially fruitful avenue of research as it combines having indivisible goods and having geometric constraints. Another representative of such problems is the \emph{necklace-splitting} problem, in which a string of $k$ types of beads is to be split among $n$ thieves so that each thief receives the same number of each type of bead.  (See e.g., \cite{alon}).  
This problem is different from our problem as it focuses on balancing the pieces for the $k$ types simultaneously and does not involve the thieves having different preferences for the beads. 

The necklace cutting problem is like the traditional cake-cutting problem, but the discreteness of the problem means an envy-free division may not necessarily exist.  The simplest example is when we have a single bead valued by every player.  If we give the bead to one player, all other players will be envious.


\begin{problem}
It is natural to consider the following as fundamental problems for studying envy-free necklace-cutting:
\begin{itemize}
\item What additional assumptions do we need to make to guarantee an envy-free division exists? 
\item What algorithm produces such divisions? 
\item When envy-free is impossible, what is the best we can do? For example, what is the smallest $\epsilon$ for which we can guarantee an $\epsilon$-envy-free division?
\end{itemize}
\end{problem}

Marenco and Tetzlaff \cite{marenco2011} initiated the study of envy-free necklace-cutting by considering the case where each bead is valuable to exactly one player.  (They don't use the language of necklace-cutting, but speak of atoms arranged on an interval.)  Appealing to the techniques in \cite{su1999} for traditional cake-cutting, they use a combinatorial result known as Sperner's lemma to find ``nearby'' divisions of the necklace, one for each player, in which each player prefers a different piece of the necklace in their division.  The location of cuts in these divisions differs by only one bead for each knife, and the idea is to use the location of the cuts to determine a final division that will be envy-free for all players.  The position of each knife in the final division is given by the position of that knife in the splitting associated to the player who values the contested bead.

We build on their work by examining the questions above for different classes of constraints, including more general preferences where several players may value each bead (Section \ref{sec:monolithic}), and a two-dimensional arrangement of the beads (Section~\ref{sec:2d}).   


\section{A cake-cutting model for envy-free necklace-cutting}

We wish to divide an open necklace of indivisible beads, but a recurring technique of this paper is to cut the cake analogue of a necklace (so that beads are potentially divided) and then slide the cuts so they do not divide beads.

We model this by considering the necklace as an interval $[0,\ell]$, in which the beads are now intervals between consecutive integers.  Viewed as a cake, cuts may be made at any point, but to be a necklace-cutting, cuts must lie at integers.  See Figure~\ref{fig:2 necklaces}.   

\begin{figure}[h]
\begin{center}
\begin{tabular}{cc}
\begin{tikzpicture}
\tikzstyle{A}=[circle,minimum size=0.3cm,draw=black,inner sep=0pt,outer sep=0pt];
\tikzstyle{edge} = [draw,line width=0pt,-];
\node[A](1)at(7,0.5){};
\node[A](2)at(8,0.5){};
\node[A](3)at(9,0.5){};
\node[A](4)at(10,0.5){};
\node[A](5)at(11,0.5){};
\path[edge] (1)--(2)-- (3)--(4)--(5);
\draw[very thick](7.5,-0.5)--(7.5,1.5);
\draw[very thick](8.5,-0.5)--(8.5,1.5);
\end{tikzpicture} & \begin{tikzpicture}
\draw[step=1cm,color=gray] (0,0) grid (5,1);
\draw[very thick](1,-0.5)--(1,1.5);
\draw[very thick](2,-0.5)--(2,1.5);
\end{tikzpicture} 
\end{tabular}
\caption{Two equivalent ways of picturing a discrete necklace cut into three strings of beads.
\label{fig:2 necklaces}}
\end{center}
\end{figure}
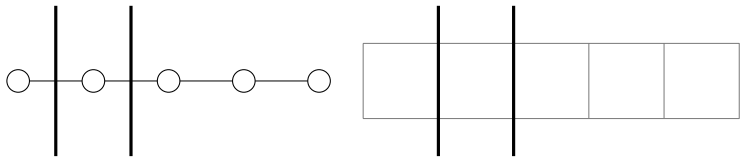

We assume each player $P$ has a nonnegative, additive, real-valued \emph{valuation} function $v_P$ defined on any possible string of beads, but because we will be cutting cake analogues of the necklace, we define $v_P$ on any fraction of a bead to be that same fraction of the valuation of a full bead.  In other words, we imagine the valuation of a bead to be uniformly distributed over the bead. 

This gives rise to a \emph{preference} relation between each two possible strings of beads, where
player $P$ \emph{prefers} string $A$ to $B$ if and only if $v_P(A) \geq v_P(B)$. We stress that a player may prefer several strings of beads if they value them equally. 
If $A$ is preferred to $B$ but not vice-versa, we say that the player \emph{strictly prefers} $A$ to $B$.

\section{Non-Monolithic Preferences}
\label{sec:monolithic}
\label{sec:general}

Marenco and Tetzlaff  \cite{marenco2011} showed that we can achieve an envy-free division of the necklace in the case where each bead is valued by only one player.  We say players have \emph{monolithic preferences} in this case, and we can label beads by the identities of the players who value them. See Figure \ref{beadscut} for an example. 

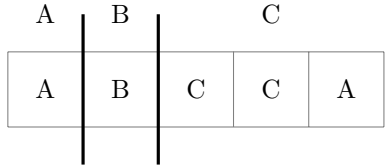
\begin{figure}[h]
\begin{center}
\begin{tikzpicture}
\draw[step=1cm,color=gray] (0,0) grid (5,1);
\draw(0.5, 0.5) node[anchor=center]{A};
\draw(1.5, 0.5) node[anchor=center]{B};
\draw(2.5, 0.5) node[anchor=center]{C};
\draw(3.5, 0.5) node[anchor=center]{C};
\draw(4.5, 0.5) node[anchor=center]{A};
\draw[very thick](1,-0.5)--(1,1.5);
\draw[very thick](2,-0.5)--(2,1.5);
\draw(0.5, 1.5) node[anchor=center]{A};
\draw(1.5, 1.5) node[anchor=center]{B};
\draw(3.5, 1.5) node[anchor=center]{C};
\end{tikzpicture}
\caption{A discrete necklace where players have monolithic preferences, cut by $2$ cuts into $3$ strings of beads. The labels in the necklace denote the players who value those beads; the labels above the necklace denotes the allocation. 
This particular allocation is envy-free if $A$ has the same valuation for both $A$-beads.
\label{beadscut}}
\end{center}
\end{figure}

\begin{thm}
\label{thm:main}(Marenco--Tetzlaff)
If $n$ players with monolithic preferences are to divide a necklace of beads, then there exists an envy-free division of the necklace using only $(n-1)$ cuts.
\end{thm}

We wish to study the case where each bead may be valued by more than one player.  
After all, the problem of fair division is most interesting when players bicker over items that are mutually desired.
Thus, unless otherwise stated, we assume valuation functions $v_P(b)$ can be positive for any player $P$ and for any bead $b$.  We've already seen an example in Section~\ref{sec:introduction} that we cannot guarantee an envy-free division in all cases.  A less trivial example is given in Figure~\ref{fig:multiple-prefs}.

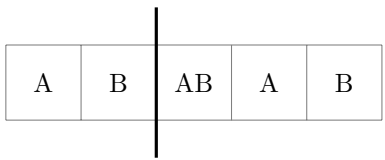
\begin{figure}[h]
\begin{center}
\begin{tikzpicture}
\draw[step=1cm,color=gray] (0,0) grid (5,1);
\draw(0.5, 0.5) node[anchor=center]{A};
\draw(1.5, 0.5) node[anchor=center]{B};
\draw(2.5, 0.5) node[anchor=center]{AB};
\draw(3.5, 0.5) node[anchor=center]{A};
\draw(4.5, 0.5) node[anchor=center]{B};
\draw[very thick] (2,-0.5) -- (2,1.5);
\end{tikzpicture}
\caption{If each player values each bead with their label equally, then there is no envy-free division between players $A$ and $B$; they will fight over the middle bead. 
\label{fig:multiple-prefs}}
\end{center}
\end{figure}

However, just because we cannot guarantee an envy-free division does not mean we cannot come close. As in \cite{su1999}, call a division \emph{$\epsilon$-envy-free} if for each player, the other strings are worth no more than $\epsilon$ plus the value of the string assigned to them. Note that a division is $0$-envy-free if and only if it is envy-free. We achieve the following result:

\begin{thm}
\label{thm:general} 
For a necklace of beads, where the value of each bead is at most $s$ to every player, there always exists an $\epsilon$-envy-free division of the necklace among $n$ players using only $(n-1)$ cuts with $\epsilon < 2s$. In particular, there is always a $(2s)$-envy-free division.
\end{thm}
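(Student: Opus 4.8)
The plan is to reduce to the continuous setting of Theorem~\ref{thm:continuous existence} by ``smearing'' each bead into a uniform density, and then to round the resulting real-valued cuts to integer positions while controlling how much a player's valuation of any piece can change. First I would turn the necklace $N$ into a continuous cake on $[0,l]$: for each player $P$, define a continuous valuation $\tilde v_P$ whose density on the unit interval occupied by a bead $b$ is the constant $v_P(b)$, so that a sub-interval covering a fraction $t$ of $b$ is worth $t\,v_P(b)$ to $P$. Each $\tilde v_P$ is additive, nonatomic, and continuous, and agrees with $v_P$ on integer intervals, so the hypotheses of Theorem~\ref{thm:continuous existence} are met. Applying that theorem produces an envy-free continuous allocation with cuts $0=c_0\le c_1\le\cdots\le c_{n-1}\le c_n=l$ together with a bijection assigning each player $P$ a piece, where in the continuous valuation no player envies another.

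Next I would round each interior cut $c_i$ to the nearest integer $c_i'$ (breaking ties arbitrarily for now), keeping $c_0'=0$ and $c_n'=l$. Since nearest-integer rounding is monotone, $c_0'\le c_1'\le\cdots\le c_n'$, so the rounded cuts form a legal discrete division (possibly with some empty strings of beads); I keep the same player-to-piece bijection. It then remains to bound, for every player $P$ and every ordered pair of pieces, how far this rounding can push $P$ toward envy.

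The core estimate is a per-cut bound. Moving cut $c_i$ to $c_i'$ displaces it by at most $\tfrac12$ and stays within the single bead containing it, so the value (to any fixed $P$) of the sliver that crosses from one side of the cut to the other is at most $\tfrac12 s$. Each piece is bounded by two cuts, so its $\tilde v_P$-value changes by at most $s$ under rounding: the piece $A$ assigned to $P$ can lose at most $s$, and any other piece $B$ can gain at most $s$. Writing the post-rounding envy (with $A',B'$ the rounded pieces) as
\[
\tilde v_P(B')-\tilde v_P(A')=\big(\tilde v_P(B')-\tilde v_P(B)\big)+\big(\tilde v_P(B)-\tilde v_P(A)\big)+\big(\tilde v_P(A)-\tilde v_P(A')\big),
\]
the middle term is $\le 0$ by envy-freeness of the continuous allocation, and the two outer terms are each $\le s$, giving envy $\le 2s$ for every pair. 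This establishes the $(2s)$-envy-free division.

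The main obstacle is upgrading $\le 2s$ to the strict bound $\epsilon<2s$. Equality at $2s$ would force all four relevant slivers to have value exactly $\tfrac12 s$ (so each such cut sits at a half-integer and its bead is worth exactly $s$ to $P$) and the two pieces to be exactly tied under $\tilde v_P$. I expect to rule this out by exploiting the remaining freedom — choosing tie-breaking directions at half-integer cuts, or noting that the swept sliver at each cut is a strict fraction of one bead so the extremal configuration cannot be saturated at every piece at once — and this bookkeeping, together with checking that the estimate holds uniformly over all players and all pairs of pieces simultaneously, is the delicate part of the argument.
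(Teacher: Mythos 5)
Your reduction and your $\le 2s$ bound follow the paper's proof exactly: smear each bead uniformly into a unit of continuous cake, apply Theorem~\ref{thm:continuous existence} to get an envy-free continuous division, round each cut to the nearest integer, and observe that each bounding cut moves by at most $\tfrac12$ and hence changes a piece's value to $P$ by at most $\tfrac12 s$ per end. Up to the conclusion ``envy $\le 2s$'' the argument is correct and is the same argument as the paper's.

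The gap is the one you flag yourself and then defer: the theorem asserts the strict bound $\epsilon < 2s$, and the strictness is not decoration --- the paper's corollary on binary preferences depends on it, since it converts ``envy strictly less than $2$'' plus integrality into ``envy at most $1$.'' With arbitrary tie-breaking your $\le 2s$ bound can genuinely be attained: if the continuous division has $\tilde v_P(A)=\tilde v_P(B)$ (envy-freeness permits ties) and all four relevant cuts sit at half-integers inside beads each worth exactly $s$ to $P$, then rounding both of $A$'s cuts inward and both of $B$'s cuts outward yields envy exactly $2s$. This also shows that the second device you propose (``the swept sliver is a strict fraction of one bead'') cannot work by itself: a half-integer cut sweeps exactly half a bead, so the extremal configuration is not automatically excluded. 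Your first device is the right one, and it is precisely what the paper does: round every half-integer cut in one fixed direction, say to the right. Then a loss of exactly $\tfrac12 s$ at the right end of $P$'s piece is impossible (it would require a half-integer cut rounding left), as is a gain of exactly $\tfrac12 s$ at the left end of any other piece; hence the loss to $A$ is strictly less than $s$, the gain to $B$ is strictly less than $s$, and the envy is strictly less than $2s$. This is a two-line addition, but it is a necessary step of the proof rather than optional bookkeeping, so as written the proposal proves only the weaker $(2s)$-envy-free claim.
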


\begin{proof}
Represent our necklace of $\ell$ beads as the interval $[0,\ell]$ such that beads are the intervals between consecutive integers and players' valuations of each bead are uniformly distributed over the bead's interval. If we allow the cuts to be made at non-integer points, Theorem~\ref{thm:continuous existence} guarantees that there exists an envy-free division, $D$, of the interval. We begin with the division $D$ and shift any cuts that divide a bead by rounding each such cut to the nearest integral point.  For example, if a cut is at $x = 1.2$, shift it so it is at $x = 1.0$. If a cut is at a half-integer, then we always round to the right. 

Suppose player $P$ were assigned a string of beads with value $v$ in $D$. Then this rounding process decreases $P$'s valuation by at most $s/2$ at the left end and strictly less than $s/2$ at the right end, so $P$'s valuation of her piece decreases by strictly less than $s$. Furthermore, from the perspective of $P$, the value of any other piece in $D$ increases by at most $s$ by the same logic.  Thus, $P$'s envy in the final division, where all the cuts are now at integer points, is strictly less than $2s$. 
\end{proof}

If we specialize to the situation where the valuations are integral, the strictness of the inequality above can be quite useful. One such situation is when the only opinion a player has towards a bead is that they either desire the bead or are indifferent to the bead.  We model this situation by assigning each bead's worth to a player as either exactly $1$ or $0$ respectively. In this case, we  obtain:

\begin{cor}
For $n$ players dividing a necklace of beads with valuations taking values either 0 or 1, there always exists a $1$-envy-free division using only $(n-1)$ cuts.
\end{cor}
\begin{proof}
Theorem~\ref{thm:general} shows that we can get the maximum envy to be strictly less than $2$. However, since the valuations are sums of $1$'s and $0$'s, the envy must be an integer and so the envy is bounded above by $1$.
\end{proof}

\section{Two Dimensional Divisions: a Grid of Beads}
\label{sec:2d}

In this section, we consider a $2$-dimensional grid of indivisible beads that we wish to divide in an envy-free way. We can imagine that there are strings in the $4$ cardinal directions connecting the beads; we can also just imagine that the beads are on a valueless ``quilt'' which is to be cut.  This is a generalization of cake-cutting problem in two-dimensions, which has been studied (e.g., \cite{segal-halevi}), but for divisible goods.  An important consideration in two-dimensional division is the geometry of the pieces. Our main theorem is Theorem \ref{thm:2d}, which uses a ``sliding'' method remniscent of network flow problems from computer science.

Segal-Halevi (personal communication, Nov.\,2015) 
has pointed out that an envy-free division of a grid of beads can be accomplished by the Marenco-Tetzlaff result by cutting the grid into a one-dimensional necklace along a snake-like path that winds back and forth across the rows of the grid, from top to bottom. (In fact, any Hamiltonian path along the grid graph would do.) This would achieve an an envy-free division with pieces that are connected in the original grid.  He also notes that in the case of preferences that are not necessarily monolithic, our Theorem \ref{thm:general} would yield an $(2s)$-envy-free division where $s$ is the maximum valuation of any bead by any player.

However, we desire our pieces to align with the geometry of the grid in near-vertical cuts, which could be useful if, for instance, the rectangular grid represented a grid of resources to be split up by players, and for some geometric reason, we need the pieces to ``connect'' vertically to resources at the top and the bottom of the grid.  Perhaps the top of the grid is access to a waterway useful for trade, and the bottom of the grid is access to a canyon of raw materials.  This situation may be contrived, but we believe the geometric constraint is interesting and the methods generated by this paper useful for further analysis.

If we demand actual vertical cuts, Theorem~\ref{thm:general} yields an immediate corollary:



\begin{cor}
\label{cor:easycor}
Consider a $2$-dimensional grid of beads. Suppose $s$ is the highest total valuation of the beads in a column by any player. Then there exists an $\epsilon$-envy-free division among $n$ players using $(n-1)$ vertical cuts, where $\epsilon < 2s$.
\end{cor}

\begin{proof}
We model our grid of beads as the region $[0,k] \times [0,\ell]$ such that each square is a bead and the players' valuations are spread uniformly across each bead. We can now treat our grid as a $1 \times \ell$ necklace by summing the valuations of the $i$-th column to obtain the valuation of the $i$-th bead. Applying Theorem~\ref{thm:general} gives the result. 
\end{proof}

However, it seems that we can do significantly better if we relax our requirement so that cuts are allowed to be ``near-vertical''.  For what follows, define a \emph{near-vertical cut} of a grid to be a path of edges, all lying within one vertical column of the grid, that separate the squares into two sets (as a Jordan curve).  We consider two cuts to be \emph{non-intersecting} if the cuts do not intersect each other transversely, that is, there do not exist two points of a cut such that they are in the interiors of the two different sets created by the other cut.

\begin{thm}
\label{thm:2d}
Suppose $n$ players have monolithic preferences over a 2-dimensional grid of beads with valuations taking values either 0 or 1. Then there exists an envy-free division of the grid using $(n-1)$ non-intersecting near-vertical cuts.
\end{thm}

\begin{figure}[h]
\begin{center}
\begin{tabular}{cc}
\begin{tikzpicture}
\draw[step=1cm,color=gray] (0,0) grid (3,5);
\fill[pattern=north east lines] (1,0) rectangle (2,1);
\fill[pattern=north east lines] (1,2) rectangle (2,3);

\fill[pattern=north east lines] (0,3) rectangle (1,4);
\fill[pattern=north east lines] (1,4) rectangle (2,5);
\fill[pattern=dots] (1,3) rectangle (2,4);
\fill[pattern=dots] (2,0) rectangle (3,5);
\fill[pattern=dots] (1,1) rectangle (2,2);

\draw[very thick] (1, -0.5) -- (1,1)  -- (1, 3) -- (0, 3) -- (0, 4) -- (1, 4) -- (1, 5.5);
\draw[very thick] (2, -0.5) -- (2,1) -- (1,1) -- (1,2) -- (2,2) -- (2, 3) -- (1, 3) -- (1, 4) -- (2, 4) -- (2, 5.5);
\end{tikzpicture} & \begin{tikzpicture}
\draw[step=1cm,color=gray] (0,0) grid (3,5);
\tikzstyle{A}=[circle,minimum size=0.3cm,draw=black,inner sep=0pt,outer sep=0pt];

\foreach \y in {+0.5,+1.5,2.5,3.5, 4.5}
  \foreach \x in {0.5,1.5,2.5}
    \node [A] at (\x, \y) {};

\draw[very thick] (0.8, -0.5) -- (0.8,3)  -- (0.2,3) -- (0.2,4) -- (0.8, 4) -- (0.8, 5.5);
\draw[very thick] (1.8, -0.5) -- (1.8, 1) -- (1.2, 1) -- (1.2,2) -- (1.8,2) -- (1.8,3) -- (1.2, 3) -- (1.2, 4) -- (2, 4) -- (2, 5.5);
\end{tikzpicture}

\end{tabular}
\caption{The grid is cut via $2$ cuts into $3$ pieces, each (barely) path-connected if we allow movement along the cuts themselves. The right is a visualization of this division interpreted as beads on a valueless ``quilt.'' 
\label{fig:2d-connectivity}}
\end{center}
\end{figure}
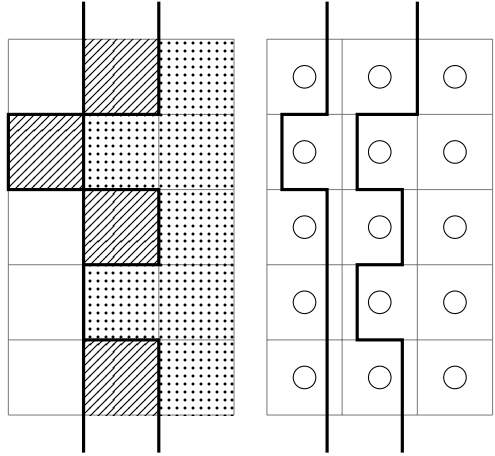

\begin{proof}
Again, we model our grid of beads as the region $[0,k] \times [0,\ell]$. By Theorem~\ref{thm:continuous existence}, there exists an envy-free division of the grid using $(n-1)$ parallel vertical cuts. We think of each of these vertical cuts as a union of $k$ vertical edges. We call such a vertical edge \emph{integral} if its $x$-coordinate is integral and \emph{non-integral} otherwise. Our strategy is to slide non-integral vertical edges left or right so they become integral while keeping them connected via horizontal edges along the integral $y$-coordinates. We do this while holding the relative left-to-right order of the $(n-1)$ vertical edges on each horizontal strip constant, thus generating the desired cuts for the discrete grid of squares. 

We say that a vertical edge \emph{borders} players $P$ and $Q$ if the two adjacent pieces allocated by the original envy-free allocation belong to $P$ and $Q$. First, consider any bead $S$ desired by player $P$ such that at least one vertical edge going through $S$ borders $P$. In this case, we can allocate the bead completely to player $P$ (by moving all edges in $S$ left of $P$'s piece to $S$'s left boundary and all edges in $S$ right of $P$'s piece to $S$'s right boundary). Due to monolithic preferences, this gives player $P$ a strictly more valuable piece and does not affect the preferences and envy of other players, as only $P$ cares about the bead. Doing this for all beads ensures two conditions now hold for our (still envy-free) division $D$: 
\begin{itemize}
\item for every player, $P$, $D$ assigns an integral amount of beads that player $P$ values to player $P$ (if not, then there must be some non-integral vertical edge bordering $P$ somewhere);
\item if a bead desired by $P$ contains a non-integral vertical edge bordering $P_1$ and $P_2$, then neither $P_1$ nor $P_2$ can be $P$.
\end{itemize}

Our strategy to move the remaining non-integral edges is as follows. Suppose we have at least one non-integral vertical edge somewhere in our envy-free division. We describe a sliding process such that:
\begin{itemize}
\item we stay envy-free at all times,
\item any time a non-integral edge slides into another, we consider the two edges to have merged into a single edge (and sliding the resulting edge corresponds to the underlying edges moving together as a group), and
\item any time a non-integral edge becomes integral, we no longer slide it.
\end{itemize}
When a non-integral edge slides into another edge or becomes integral, the number of non-integral edges decreases by one; we can then repeat this process until all non-integral edges become integral. Thus, it suffices to show that we can always perform this sliding process without losing the envy-free property.

Consider any bead $S$ with non-integral vertical edges going through it, desired by player $P$. Say that $S$ is \emph{contested} by $Q$ if $Q$'s piece contains part of $S$. Let the set of players contesting $S$ contain $P_1$ and $P_2$ (recall that neither can be $P$). We say we \emph{donate from $P_1$ to $P_2$ through $S$} if we slide the non-integral vertical edges through $S$ in a (unique) way that the $P_1$ piece in $S$ decreases at a constant speed, the $P_2$ piece in $S$ increases at the same constant speed, and the other pieces in $S$ stay at constant size. Donating from $P_1$ to $P_2$ through $S$ keeping the division envy-free results in one of the following:
\begin{itemize}
\item the $P_1$ part of $S$ becomes empty, which causes the number of non-integral edges to decrease, or
\item $P$ values $P_2$'s piece exactly equal to her own, in which case any further donation to $P_2$ from $P_1$ would make $P$ envious of $P_2$.
\end{itemize}

Suppose there exists at least one remaining bead desired by player $P$ with at least one non-integral edge going through it.  We define an auxiliary graph $G$ with vertices indexed by players who are not $P$. For each bead $S$ desired by player $P$ with at least one non-integral edge, draw an edge in $G$ labeled by $S$ with endpoints $P_1$ and $P_2$ for every pair such that a non-integral edge in $S$ borders $P_1$ and $P_2$ 
(we allow multiple edges in $G$). We have two cases:

\begin{itemize}
\item If we have a cycle (with no repeated vertices) in $G$ of the form 
\[
P_1 \rightarrow P_2 \rightarrow \cdots \rightarrow P_k \rightarrow P_1,
\]
then we can simultaneously donate from $P_1$ to $P_2$ (through the bead corresponding to the edge between $P_1$ and $P_2$ in $G$), $P_2$ to $P_3$, etc. through $P_k$ to $P_1$. Since $P$'s valuation of all the $P_i$'s pieces are constant (each piece is donating and being donated to at the same rate), at some point at least one of the parts belonging to some $P_i$ in one of these beads becomes $0$, corresponding to a decrease in the number of non-integral vertical edges. 
\item If we do not have a cycle, then some vertex must have degree $1$. This means there is some bead $S$ desired by $P$ and contested by $Q \neq P$ where $Q$ does not have a fractional piece of a bead desired by $P$ anywhere else. This means $P$'s valuation of $Q$'s piece has fractional part exactly equal to $Q$'s amount in $S$. Because $P$'s evaluation of $P$'s own piece is currently integral, we can give the entire bead $S$ to $Q$ without fear that $P$ will become envious of $Q$. 
\end{itemize}
For an example of this process, see Figure ~\ref{fig:sliding}. 

\begin{figure}[h]
\begin{center}
\begin{tabular}{cc}
\begin{tikzpicture}
\draw[step=1cm,color=gray] (0,0) grid (4,2);
\node at (0.5,0.5) {C};
\node at (0.5,1.5) {C};
\node at (1.5,0.5) {A};
\node at (1.5,1.5) {A};
\node at (2.5,0.5) {B};
\node at (2.5,1.5) {B};
\node at (3.5,0.5) {A};
\node at (3.5,1.5) {B};

\node at (0.75,2.5) {C};
\node at (2.25,2.5) {B};
\node at (3.5,2.5) {A};

\node [draw=none] at (1.5, 1.5) (hu) {};
\node [draw=none] at (1.5, 0.5) (hl) {};
\node [draw=none] at (0.5, 1.5) (hu2) {};
\node [draw=none] at (2.5, 0.5) (hl2) {};
\draw [->] (hu) -- (hu2);
\draw [->] (hl) -- (hl2);

\draw[very thick] (1.5, -0.5) -- (1.5, 2.5);
\draw[very thick] (3, -0.5) -- (3, 2.5);

\end{tikzpicture} & \begin{tikzpicture}
\draw[step=1cm,color=gray] (0,0) grid (4,2);
\node at (0.5,0.5) {C};
\node at (0.5,1.5) {C};
\node at (1.5,0.5) {A};
\node at (1.5,1.5) {A};
\node at (2.5,0.5) {B};
\node at (2.5,1.5) {B};
\node at (3.5,0.5) {A};
\node at (3.5,1.5) {B};

\node at (0.5,2.5) {C};
\node at (2,2.5) {B};
\node at (3.5,2.5) {A};

\draw[very thick] (2, -0.5) -- (2,1) -- (1,1) -- (1, 2.5);
\draw[very thick] (3, -0.5) -- (3, 2.5);

\end{tikzpicture}

\end{tabular}
\caption{Left: an envy-free cut where $A$ values all $3$ pieces equally. We wish to move the non-integral vertical edges, but doing either in isolation would cause $A$ envy. Thus, we slide the top edge left at the same rate that the bottom edge slides right, keeping $A$ envy-free, until they slide to integral points.
\label{fig:sliding}}
\end{center}
\end{figure}
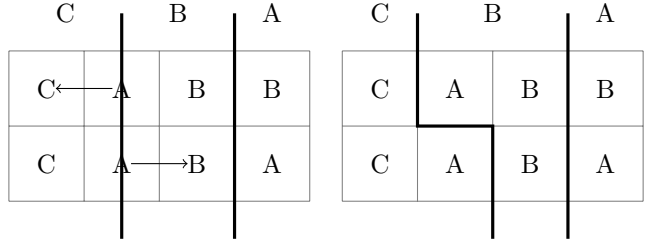

In both cases, we strictly decrease the number of non-integral vertical edges. Thus, we are able to perturb the vertical edges until all edges are integral so that the cut is near-vertical, in which case we have an envy-free allocation of the grid of beads.
\end{proof}

From the proof, we observe that a vertical edge through a bead $S$ in the original division can move to the left or right boundary of $S$. This means in the worst case, we can get some disjoint-looking pieces, with each vertical cut having a possible horizontal deviation of distance at most $1$; for an example, see Figure~\ref{fig:2d-connectivity}. However, we maintained the vertical nature of the pieces, as desired.

This proof only uses the 0-1 constraint in the second bullet point above when resolving the degree 1 nodes.  It would be interesting to see if this proof could be extended to more general preferences.

\section{Acknowledgments}

This work began at the 2014 AMS Mathematics Research Community - Algebraic and Geometric Methods
in Applied Discrete Mathematics, which was supported by NSF DMS-1321794. It was finished while two of the authors were in residence at at the Mathematical Sciences Research Institute in Berkeley, California, during the Fall 2017 semester where they were supported by the National Science Foundation under Grant No. DMS-1440140.
Su was supported partially by NSF Grant DMS-1002938. We thank Boris Alexeev for valuable conversation. We also thank Erel Segal-Halevi for his valuable comments in Section \ref{sec:2d}. 

\bibliographystyle{abbrv}
\bibliography{extension_latest}

\end{document}